\documentclass[11pt]{amsart}

\usepackage{graphicx}
\usepackage{amsmath,amssymb,amsthm}
\usepackage[english]{babel}
\usepackage[autostyle, english = american]{csquotes}
\MakeOuterQuote{"}
\usepackage[letterpaper,top=2cm,bottom=2cm,left=3cm,right=3cm,marginparwidth=1.75cm]{geometry}
\usepackage{hyperref}
\hypersetup{colorlinks=true,linkcolor=black,citecolor=black,urlcolor=black}

\theoremstyle{definition}
\newtheorem{definition}{Definition}
\theoremstyle{plain}
\newtheorem{theorem}{Theorem}
\newtheorem{lemma}{Lemma}
\newtheorem{conjecture}{Conjecture}

\title{A counterexample to a mixing-time conjecture for repeated averages on graphs}
\author{Nikash Gupta}

\date{September 2026}

\begin{document}
\maketitle

\section*{Abstract}
The repeated averages process is a stochastic averaging process on graphs whose mixing time is known for several structured families, but no general sharp expression is known for all connected graphs. It was conjectured that the $L^2\!\to\!L^1$ mixing time is of order $|E|\log n/\lambda_2$. We disprove this conjecture using the graph $G_n$ obtained by attaching one leaf to the complete graph $K_n$. We prove that $\lambda_2(G_n)=1$ and that
\[
t_{\varepsilon,2\to1}(G_n)=\Theta_{\varepsilon}(\gamma(G_n))=\Theta_{\varepsilon}(n^2),
\]
with no additional $\log n$ factor. The example has a strongly localized Fiedler eigenvector: most of its squared $L^2$ mass lies on the leaf, while the balancing mass is spread across the clique.

\section{Introduction}

\subsection{Background and motivation}
Suppose that there are $n$ people on a social network to discuss politics. We would expect that when two people interact on this network, their political views become closer to each other after they share their opinions. Then we would expect that after sufficiently many interactions, the range of political opinions among all $n$ people should have decreased significantly. In other words, after some time, a consensus should form in the group. The study of how opinions change under these circumstances is part of a broader field known as opinion dynamics. A widely used mathematical model for opinion dynamics is the averaging process. See, e.g., \cite{hegselmann2005opinion} by Hegselmann and Krause.

\medskip
The averaging process is defined as follows. Assume we have a connected graph $G = (V, E)$ on $n$ vertices and a starting vector $v(0) = (v_1(0), \dots, v_n(0))$. At time $t$, select an edge $e = (i, j)$ from $E$ uniformly at random. Then, to determine $v(t + 1)$ from $v(t)$, preserve all entries except for the $i$th and $j$th entries. Update the $i$th and $j$th entries with the following averaging rule:
\begin{center}
\[
v_{i}(t + 1) = v_j(t + 1) = \frac{v_i(t) + v_j(t)}{2}.
\]
\end{center}
Observe that the sum of the entries in $v(t)$ does not depend on $t$. In the context of the social network, the interpretation of this averaging rule is that whenever two people discuss, they concede an equal part of their opinions. Furthermore, the graph $G$ tells us which pairs from the $n$ individuals may interact at each step.

\medskip
In 2012, Aldous and Lanoue \cite{aldous2012lecture} studied theoretical aspects of the averaging process and proved universal convergence guarantees; they also investigated connections to Markov chains. More recently, Chatterjee, Diaconis, Sly, and Zhang \cite{chatterjee2022phase} analyzed the complete graph and discovered a cutoff phenomenon, adapting the notion of cutoff from finite Markov chains (Diaconis, 1996) \cite{diaconis1996cutoff}.

\medskip
Inspired by these advances, Movassagh, Szegedy, and Wang \cite{movassagh2022repeatedaveragesgraphs} studied repeated averages on broad graph families. They studied $L^1$ and $L^2$ metrics, proved general contraction bounds, and formulated two conjectures about the order of mixing times. These conjectures are as follows.

\begin{conjecture}
Let $G$ be any connected graph with $n$ nodes,
\[
t_{\varepsilon,1}(G)
=
\Theta_{\varepsilon}\bigl(\max\{\gamma(G),n\log n\}\bigr).
\]
\end{conjecture}

\begin{conjecture}
Let $G$ be any connected graph with $n$ nodes,
\[
t_{\varepsilon,2\to1}(G)
=
\Theta_{\varepsilon}\bigl(\gamma(G)\log n\bigr).
\]
\end{conjecture}

Conjecture 1 has since been refuted for the hypercube by Pietro Caputo, Matteo Quattropani, and Federico Sau \cite{Caputo_2023}. This paper focuses on Conjecture 2 and presents a simple counterexample: the complete graph with a single attached leaf.
The key phenomenon is \emph{eigenvector localization}: when a Fiedler vector (an eigenvector corresponding to the second-smallest eigenvalue of the Laplacian) concentrates most of its mass on a small part of the graph, the extra $\log n$ factor in the $L^2\!\to\!L^1$ mixing time can disappear.

\subsection{Main finding}
Define the clique $K_n$ to be the graph on $n$ vertices where every pair of vertices is connected by an edge. For each $n$, define $G_n$ to be the graph $K_n$ with an additional vertex called the \emph{leaf} which is connected by a single edge to one vertex in the clique (a \emph{pendant edge}). For the family of graphs $\{G_n\}$, we prove the following main theorem, which serves as a counterexample to Conjecture 2.

\begin{theorem}[Main theorem]\label{thm:main}
For every fixed $\varepsilon\in(0,1)$, as $n\to\infty$,
\[
t_{\varepsilon,2\to1}(G_n)
=
\Theta_{\varepsilon}\bigl(\gamma(G_n)\bigr).
\]
\end{theorem}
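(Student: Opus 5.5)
The plan is to handle the two directions separately, using the definitions from \cite{movassagh2022repeatedaveragesgraphs}: $\gamma(G)=|E|/\lambda_2$, and $t_{\varepsilon,2\to1}$ is the first time at which, for every initial vector with $\|v(0)-\mu\mathbf 1\|_2\le 1$, the expected $L^1$ distance to consensus $\mathbb E\|v(t)-\mu\mathbf 1\|_1$ is at most $\varepsilon$. Here $\mu$ is the conserved mean.

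\emph{Spectrum.} First I would compute the Laplacian spectrum of $G_n$ exactly. Call the leaf $\ell$, its neighbour the hub $h$, and the remaining $n-1$ clique vertices the bulk. Vectors supported on the bulk with zero sum are eigenvectors with eigenvalue $n$. The rest of the spectrum lives on the $3$-dimensional space of vectors that are constant on the bulk, where the Laplacian reduces to a $3\times3$ matrix. One checks that $x_\ell=-(n-1)$, $x_h=0$, $x_{\text{bulk}}=1$ is an eigenvector with eigenvalue $1$. The remaining nonzero eigenvalue is $n+1$, from the trace. Hence $\lambda_2=1$ and $\gamma(G_n)=|E|=\binom n2+1\asymp n^2$. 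The normalized Fiedler vector has leaf entry $-\sqrt{(n-1)/n}=\Theta(1)$, so it is localized on the leaf.

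\emph{Lower bound.} Take $v(0)$ to be the normalized Fiedler vector, so $\mu=0$ and $|v_\ell(0)|\to1$. The leaf value changes only when the pendant edge is selected, which happens with probability $1/|E|$ per step. Up to time $t=cn^2$, with probability at least $(1-1/|E|)^{cn^2}\ge e^{-3c}$ the pendant edge is never chosen, and then $\|v(t)\|_1\ge|v_\ell(0)|$. Therefore $\mathbb E\|v(t)\|_1\ge e^{-3c}(1-o(1))>\varepsilon$ once $c=c(\varepsilon)$ is small. This gives $t_{\varepsilon,2\to1}\ge c_\varepsilon n^2$.

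\emph{Upper bound.} Let $c(t)$ be the clique mean. From $nc+v_\ell=(n+1)\mu$ we get
\[
\|v-\mu\mathbf 1\|_1\le 2|v_\ell-\mu|+D,\qquad D=\sum_{i\in K_n}|v_i-c|.
\]
So it suffices to make both $|v_\ell-\mu|$ and $D$ small in expectation at time $Cn^2$. Note that $|v_\ell-\mu|$ and $|v_h-\mu|$ are at most $1$ throughout, because the $L^2$ distance to $\mu\mathbf1$ is nonincreasing.

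I would control $D$ through the internal energy $\Phi=\sum_{i\in K}(v_i-c)^2$. A clique edge decreases $\Phi$ by $(v_i-v_j)^2/2$. Since the clique Laplacian has gap $n$ on mean-zero vectors and $|E|\sim n^2/2$, we get $\mathbb E[\Phi(t+1)\mid\mathcal F_t]\le(1-a/n)\Phi(t)$ on non-leaf steps. A leaf step raises $\Phi$ by at most $O((v_\ell-v_h)^2)$. I would run the argument in epochs. Condition on the times of the leaf firings, which form approximately a Poisson process of rate $2/n^2$. Between firings, $\Phi$ decays by a factor $n^{-10}$ within $O(n\log n)$ steps. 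So unless a leaf firing occurred in the last $Kn\log n=o(n^2)$ steps (probability $o(1)$), we get $\Phi(Cn^2)\le n^{-8}$, hence $D\le\sqrt{n\Phi}=o(1)$.

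For the leaf, at a firing $v_\ell$ moves to $(v_\ell+v_h)/2$. If the clique is internally equilibrated then $v_h\approx c=\mu+O(1/n)$, so $|v_\ell-\mu|$ halves up to an error $O(\sqrt\Phi+1/n)$. By the epoch argument, a firing that is preceded by at least $Kn\log n$ quiet steps sees $\Phi\le n^{-8}$. Among the $\mathrm{Poisson}(2C)$ firings up to $Cn^2$, all but $o(1)$ of them in expectation are such good firings. Hence
\[
\mathbb E|v_\ell(Cn^2)-\mu|\le \mathbb E\bigl[2^{-(N-1)}\bigr]+o(1)\le 2e^{-C}+o(1),
\]
where $N$ is the number of good firings. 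Choosing $C=C(\varepsilon)$ then gives $\mathbb E\|v(Cn^2)-\mu\mathbf 1\|_1\le\varepsilon$.

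\emph{Main obstacle.} The delicate step is making the epoch argument rigorous. The decay of $\Phi$ holds only in conditional expectation, and the firing times are random and interact with the state. I would first try conditioning on the full sequence of edge selections of the pendant edge, which is independent of the values. Given that sequence, the remaining steps are i.i.d.\ uniform clique edges, so the supermartingale bound for $\Phi$ applies on each gap. A bad firing (gap shorter than $Kn\log n$) can be absorbed crudely, using only that it never increases $|v_\ell-\mu|$ beyond $1$.
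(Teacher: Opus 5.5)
Your proposal is correct. Your spectral computation matches Lemma~\ref{lem:spectrum}, and your treatment of the clique-internal term $D$ is essentially the paper's Phase~2: no pendant firing in the last $Kn\log n$ steps, clique-only $L^2$ decay, Cauchy--Schwarz, and a crude $\sqrt{n}$ bound on an event of probability $O(\log n/n)$.

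The real differences lie in the lower bound and in how you control the leaf.

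\textbf{Lower bound.} The paper just invokes the general inequality $t_{\varepsilon,2\to1}(G)\ge\gamma(G)\log(\varepsilon^{-1})$ of Theorem~\ref{thm:general-bounds}. Your ``leaf is frozen until the pendant edge fires'' argument is self-contained instead, and it exhibits the $\Omega(n^2)$ mechanism directly.

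\textbf{Leaf term in the upper bound.} The paper never follows individual firings. It runs $\lceil c\gamma(G_n)\rceil$ steps and applies the global contraction $\mathbb{E}\|v(t)-\bar v\|_2^2\le e^{-t/(2\gamma(G_n))}$. With $\lambda_2(G_n)=1$, which you computed anyway, this gives $\mathbb{E}|v_\ell-\mu|\le e^{-c/4}$ in one line. The identity $|c-\mu|=|v_\ell-\mu|/n$ then disposes of the clique mean.

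Your firing-by-firing halving argument does go through:
\begin{itemize}
\item Conditioning on the pendant-edge times is legitimate.
\item The per-firing errors $\tfrac12\sqrt{\Phi}$ and $O(1/n)$ sum to $o(1)$, since the expected number of firings is $O(C)$.
\item Bad firings may be discarded because the probability that \emph{any} bad firing occurs is $O(\log n/n)$. That fact, not merely $|v_\ell-\mu|\le 1$, is what justifies your crude absorption.
\end{itemize}
However, the epoch bookkeeping you flag as the main obstacle is exactly what the spectral-gap contraction lets the paper skip.

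What your route buys in return is an upper bound that never uses $\lambda_2(G_n)$. It also gives one mechanistic explanation of both directions: each reduces to the $\Theta(|E|)$ waiting time for the pendant edge to fire a bounded number of times. Each such firing roughly halves the leaf discrepancy once the clique has re-equilibrated.
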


This contradicts the \cite{movassagh2022repeatedaveragesgraphs} prediction
\[
t_{\varepsilon,2\to1}(G)
=
\Theta_{\varepsilon}\bigl(\gamma(G)\log n\bigr).
\]
Our proof uses a two-phase argument: a first phase that ``balances'' the mass between the leaf and any vertex within the clique, and a second phase that further mixes the mass within the clique. The first phase has length $\Theta_{\varepsilon}(\gamma(G_n))$, while the second phase has length $\Theta_{\varepsilon}(n\log n)$.
By a spectral analysis of the Laplacian matrix of $G_n$, we will show that altogether, the $L^2\to L^1$ mixing takes $\Theta_{\varepsilon}(\gamma(G_n))$ time to complete, which contradicts Conjecture 2 from \cite{movassagh2022repeatedaveragesgraphs}.

\subsection{Connection to localization}
\cite{movassagh2022repeatedaveragesgraphs} also proved that if a unit Fiedler eigenvector is $\delta$-delocalized, meaning that its $L^1$ norm is at least $\delta\sqrt{n}$ times its $L^2$ norm, then, for fixed $\delta,\varepsilon>0$,
\[
t_{\varepsilon,2\to1}(G)
=
\Theta_{\delta,\varepsilon}\bigl(\gamma(G)\log n\bigr).
\]
Let $A$ denote the clique vertex adjacent to the leaf $\ell$ in $G_n$. A Fiedler eigenvector $f$ for $G_n$ is given by
\[
f_{\ell}=-(n-1),\qquad
f_A=0,\qquad
f_v=1
\quad\text{for every other clique vertex }v.
\]
Thus, the vector is not supported only on the leaf and its neighbor. Rather, it is strongly localized: the leaf carries a fraction $(n-1)/n$ of its squared $L^2$ mass, while the positive balancing mass is spread thinly over the other clique vertices. Moreover,
\[
\frac{\|f\|_1}{\sqrt{|V|}\,\|f\|_2}
=
\frac{2\sqrt{n-1}}{\sqrt{n(n+1)}}
=
\Theta(n^{-1/2}).
\]
This strong localization explains why the extra $\log n$ factor does not appear. Our proof suggests that $\gamma(G)$ is not the only factor that governs $L^2\to L^1$ mixing time and that Fiedler-eigenvector localization must also be considered.

\section{Preliminaries}
Consider the averaging process on a connected graph $G=(V,E)$ where the vector $v(t)$ represents the state of the averaging process at time $t$. We define the $L^p\to L^q$ mixing time as in \cite{movassagh2022repeatedaveragesgraphs}.

\begin{definition}[\cite{movassagh2022repeatedaveragesgraphs}, Def.~1]
\[
t_{\varepsilon,p\to q}(G)
\equiv
\min\Bigl\{
t\in\mathbb{N}
\ \Bigm|\
\forall\,v(0)\text{ with }\|v(0)\|_p=1:
\bigl(\mathbb{E}[\|v(t)-\bar v\|_q^q]\bigr)^{1/q}
\leq\varepsilon
\Bigr\}.
\]
\end{definition}

\noindent where
\[
\bar v
=
\frac{1}{|V|}
\left(\sum_i v_i\right)\mathbf{1}
\]
is the average vector. To put it in words, $t_{\varepsilon,p\to q}(G)$ is the smallest time $t$ such that for any starting vector on the $L^p$ sphere, the expected $L^q$ distance to the convergence vector $\bar v$ at time $t$ is at most $\varepsilon$. Next, we define $\gamma(G)$ as in \cite{movassagh2022repeatedaveragesgraphs}.

\begin{definition}[\cite{movassagh2022repeatedaveragesgraphs}, Def.~2]
\[
\gamma(G)=\frac{|E|}{\lambda_2(G)}.
\]
\end{definition}

Here $\lambda_2(G)$, also called the \emph{spectral gap} of $G$, is the second-smallest eigenvalue of the Laplacian matrix $L$ of $G$. To be precise, we also recall the $\Theta_{\varepsilon}$ notation used in \cite{movassagh2022repeatedaveragesgraphs}.

\begin{definition}[$\Theta_{\varepsilon}$ notation]
We say that
\[
f(x,\varepsilon)=\Theta_{\varepsilon}(g(x,\varepsilon))
\]
if and only if there exist constants $c_{\varepsilon},C_{\varepsilon}>0$, depending only on $\varepsilon$, such that
\[
c_{\varepsilon}g(x,\varepsilon)
\leq f(x,\varepsilon)
\leq C_{\varepsilon}g(x,\varepsilon).
\]
\end{definition}

In the proofs that follow, we will use the standard squared $L^2$ contraction estimate
\begin{equation}\label{eq:l2-contraction}
\mathbb{E}\left[\|v(t)-\bar v\|_2^2\right]
\leq
\exp\left(-\frac{t}{2\gamma(G)}\right)
\|v(0)-\bar v\|_2^2.
\end{equation}

\subsection{Counterexample construction and motivation}
\cite{movassagh2022repeatedaveragesgraphs} provides bounds on the order of $t_{\varepsilon,2\to1}$. We restate their bounds with the following theorem.

\begin{theorem}[adapted from Corollary 6 of \cite{movassagh2022repeatedaveragesgraphs}]\label{thm:general-bounds}
For every connected graph $G$ on $n\geq3$ vertices and every $\varepsilon\in(0,1)$,
\[
\gamma(G)\log(\varepsilon^{-1})
\leq
t_{\varepsilon,2\to1}(G)
\leq
4\gamma(G)\log(\sqrt{n}\,\varepsilon^{-1}).
\]
\end{theorem}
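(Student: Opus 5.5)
The plan is to prove the two inequalities separately. The upper bound will follow from the squared $L^2$ contraction estimate \eqref{eq:l2-contraction} together with Cauchy--Schwarz. The lower bound will follow from running the process from a unit Fiedler eigenvector and tracking only its expectation, which evolves linearly.

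\emph{Upper bound.} Fix $v(0)$ with $\|v(0)\|_2=1$. Since $v(0)-\bar v(0)$ is the orthogonal projection of $v(0)$ onto $\mathbf 1^\perp$, we have $\|v(0)-\bar v\|_2\le 1$. Also, $\bar v$ does not depend on $t$, because the sum of the entries is conserved. Apply Cauchy--Schwarz twice, first $\|x\|_1\le\sqrt n\,\|x\|_2$ and then Jensen for the expectation, and combine with \eqref{eq:l2-contraction}:
\[
\mathbb E\bigl[\|v(t)-\bar v\|_1\bigr]\le \sqrt n\,\Bigl(\mathbb E\bigl[\|v(t)-\bar v\|_2^2\bigr]\Bigr)^{1/2}\le \sqrt n\,\exp\Bigl(-\frac{t}{4\gamma(G)}\Bigr).
\]
The right-hand side is at most $\varepsilon$ as soon as $t\ge 4\gamma(G)\log(\sqrt n\,\varepsilon^{-1})$. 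This gives the upper bound on $t_{\varepsilon,2\to1}(G)$.

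\emph{Lower bound.} One step of the process applies the random matrix $I-\tfrac12(e_i-e_j)(e_i-e_j)^{T}$ with $(i,j)$ uniform in $E$. Its mean is $I-\frac{L}{2|E|}$. Let $f$ be a unit Fiedler vector, so $Lf=\lambda_2 f$, $f\perp\mathbf 1$ and $\bar f=0$, and start from $v(0)=f$. By linearity and the tower property,
\[
\mathbb E[v(t)]=\Bigl(1-\frac{\lambda_2}{2|E|}\Bigr)^t f=\Bigl(1-\frac{1}{2\gamma(G)}\Bigr)^t f .
\]
Convexity of the norm (Jensen) then gives
\[
\mathbb E\|v(t)\|_1\ge\|\mathbb E v(t)\|_1=(1-x)^t\|f\|_1\ge(1-x)^t, \qquad x:=\frac{1}{2\gamma(G)},
\]
using $\|f\|_1\ge\|f\|_2=1$.

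The one point needing care is turning $(1-x)^t$ into an exponential, for which I need $x\le\tfrac12$. I will use $\log(1-x)\ge -2x$ for $x\in[0,\tfrac12]$, which yields $(1-x)^t\ge e^{-t/\gamma(G)}$.

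To check $x\le\tfrac12$, i.e. $\gamma(G)\ge1$, I split into two cases.
\begin{itemize}
\item If $G=K_n$, then $\lambda_2=n$ and $|E|=n(n-1)/2$, so $\gamma=(n-1)/2\ge1$ for $n\ge3$.
\item Otherwise, use the standard bound $\lambda_2\le d_{\min}\le 2|E|/n$ (via vertex connectivity). This gives $\gamma\ge n/2\ge1$.
\end{itemize}

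Hence, if $t<\gamma(G)\log(\varepsilon^{-1})$, then $\mathbb E\|v(t)-\bar v\|_1\ge e^{-t/\gamma}>\varepsilon$ for this starting vector. Therefore $t_{\varepsilon,2\to1}(G)\ge\gamma(G)\log(\varepsilon^{-1})$, which completes the plan.
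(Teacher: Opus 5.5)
The paper gives no proof of this statement; it imports it from Corollary~6 of \cite{movassagh2022repeatedaveragesgraphs}. Your proposal is a correct, self-contained derivation, and it reproduces the stated constants exactly. The $4$ comes from taking square roots in \eqref{eq:l2-contraction} after using $\|x\|_1\le\sqrt n\,\|x\|_2$ and Jensen. The $1$ in the lower bound comes from the first-moment identity $\mathbb{E}[v(t)]=(I-L/(2|E|))^t v(0)$ applied to a unit Fiedler vector, together with $\log(1-x)\ge-2x$ for $x=1/(2\gamma(G))\le\tfrac12$. The paper's citation buys brevity. Your route buys transparency: it shows that the lower bound loses exactly the factor $\|f\|_1$. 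If you keep $\|f\|_1$ instead of bounding it by $\|f\|_2=1$, you get $t_{\varepsilon,2\to1}(G)\ge\gamma(G)\log(\|f\|_1/\varepsilon)$. This is of order $\gamma(G)\log n$ when $f$ is $\delta$-delocalized. For $G_n$, where the unit Fiedler vector has $\|f\|_1=2\sqrt{(n-1)/n}$, it gives nothing beyond order $\gamma(G_n)$. This is precisely the localization mechanism the paper emphasizes.

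Two minor remarks. First, $t_{\varepsilon,2\to1}$ is integer-valued, so your upper-bound argument literally yields $t_{\varepsilon,2\to1}(G)\le\lceil 4\gamma(G)\log(\sqrt n\,\varepsilon^{-1})\rceil$. Contraction plus Cauchy--Schwarz, as written, cannot certify the time $\lfloor 4\gamma(G)\log(\sqrt n\,\varepsilon^{-1})\rfloor$. This rounding artifact is harmless, and the paper's main proof uses only the lower bound, which you establish with no loss. Second, your case split to get $\gamma(G)\ge1$, via Fiedler's inequality $\lambda_2\le\kappa(G)\le d_{\min}$ for non-complete graphs, is correct but heavier than needed. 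Since $(n-1)\lambda_2\le\sum_{i\ge2}\lambda_i=\mathrm{tr}\,L=2|E|$, one has $\gamma(G)\ge(n-1)/2\ge1$ uniformly for every connected graph on $n\ge3$ vertices.
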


Recall that Conjecture 2 from \cite{movassagh2022repeatedaveragesgraphs} hypothesizes that
\[
t_{\varepsilon,2\to1}(G)
=
\Theta_{\varepsilon}\bigl(\gamma(G)\log n\bigr).
\]
Therefore, any counterexample to Conjecture 2 must exhibit faster mixing. Additionally, \cite{movassagh2022repeatedaveragesgraphs} describes a class of graphs for which the upper bound in Theorem~\ref{thm:general-bounds} is sharp.
In particular, they prove a theorem which shows, informally, that if a Fiedler eigenvector for a graph $G$ is \emph{delocalized}, in the sense required by their theorem, then
\[
t_{\varepsilon,2\to1}(G)
=
\Theta_{\varepsilon}\bigl(\gamma(G)\log n\bigr).
\]
We now define what it means for a vector to be delocalized, as in \cite{movassagh2022repeatedaveragesgraphs}.

\begin{definition}[\cite{movassagh2022repeatedaveragesgraphs}, Def.~6]
A vector $v\in\mathbb{R}^n$ is called $\delta$-delocalized if
\[
\|v\|_1\geq\delta\sqrt{n}\,\|v\|_2.
\]
\end{definition}

For many well-known families of graphs, such as the complete graph, star graph, and cycle graph, one can choose $\delta$-delocalized Fiedler eigenvectors as $n\to\infty$ \cite{movassagh2022repeatedaveragesgraphs}. When the relevant eigenvalue has multiplicity greater than one, such an eigenvector is not unique, so this statement concerns the indicated choice of eigenvector. Unsurprisingly, these graphs all have high degrees of symmetry, which may help produce delocalized choices. Hence, to motivate our counterexample, we considered alternate constructions involving asymmetric graphs.

Our idea was to join a small asymmetric component to a complete graph $K_n$ at a single vertex. Intuitively, the added component can reduce the spectral gap while the complete graph still supplies most of the vertices and edges. In the expression $\gamma(G)\log n$---the hypothesized order of the $L^2\to L^1$ mixing time---the factors can then reflect different parts of the graph. The simplest such construction is a single pendant vertex attached to the clique. In \cite{movassagh2022repeatedaveragesgraphs} it was shown that the spectral gap of $K_n$ is $n$; adding the pendant vertex reduces the spectral gap to $1$. We continue with a formal definition of our counterexample family and an analysis of its spectral gap.

Let $G_n$ be obtained by attaching a pendant vertex $\ell$ to a fixed vertex $A$ in $K_n$. Then
\[
|V|=n+1
\qquad\text{and}\qquad
|E|=\binom{n}{2}+1.
\]
Recall the following property of the spectral gap for any connected graph $G$ with Laplacian matrix $L$.

\begin{lemma}[Spectral gap as infimum]
\[
\lambda_2(G)
=
\inf_{w\perp\mathbf{1}}
\frac{w^\top Lw}{w^\top w}.
\]
\end{lemma}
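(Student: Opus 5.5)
We prove the Courant--Fischer (Rayleigh quotient) characterization directly from the spectral theorem. The Laplacian $L=D-A_G$ of $G$ is real symmetric, so there is an orthonormal basis $u_1,\dots,u_m$ of $\mathbb{R}^m$ (with $m=|V|$) consisting of eigenvectors $Lu_i=\lambda_i u_i$, ordered so that $\lambda_1\le\lambda_2\le\cdots\le\lambda_m$.

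\emph{Step 1: identify $\lambda_1$ and $u_1$.} The quadratic form is
\[
w^\top Lw=\sum_{\{i,j\}\in E}(w_i-w_j)^2\ge 0,
\]
so $L$ is positive semidefinite. We have $L\mathbf{1}=0$. If $w^\top Lw=0$, then $w$ is constant across every edge, and since $G$ is connected, $w$ is a multiple of $\mathbf{1}$. Therefore $\lambda_1=0$ is a simple eigenvalue, and we may take $u_1=\mathbf{1}/\sqrt{m}$. It follows that $\lambda_2>0$ and that $\{w:w\perp\mathbf{1}\}=\operatorname{span}(u_2,\dots,u_m)$.

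\emph{Step 2: lower bound.} Take any nonzero $w\perp\mathbf{1}$ and expand it as $w=\sum_{i\ge2}c_iu_i$. By orthonormality,
\[
w^\top Lw=\sum_{i\ge2}\lambda_ic_i^2\ge\lambda_2\sum_{i\ge 2}c_i^2=\lambda_2\,w^\top w .
\]
Hence the Rayleigh quotient is at least $\lambda_2$ on the whole admissible set.

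\emph{Step 3: attainment.} The choice $w=u_2$ satisfies $u_2\perp\mathbf{1}$ and gives quotient exactly $\lambda_2$. So the infimum equals $\lambda_2(G)$, and it is in fact a minimum.

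No step presents a real obstacle. The only point needing care is Step 1: we must use connectivity to ensure that $\mathbf{1}$ spans the kernel of $L$, so that the orthogonal complement of $\mathbf{1}$ is exactly the span of the remaining eigenvectors. Otherwise the infimum would be $0$ rather than the second-smallest eigenvalue as listed with multiplicity.

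In the sequel, we will apply this lemma to $G_n$. The test vector $f$ from the introduction gives $\lambda_2(G_n)\le 1$. For the matching lower bound, we will determine the spectrum of $L(G_n)$ using symmetry: vectors supported on $K_n\setminus\{A\}$ that sum to zero have eigenvalue $n$, and the remaining three-dimensional invariant subspace, spanned by $\mathbf{1}$, $e_A$, and $e_\ell$, yields the eigenvalues $0$ and $1$ together with one further eigenvalue larger than $1$.
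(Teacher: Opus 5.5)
Your argument is correct and is the standard spectral-theorem proof that the paper only alludes to (it gives no proof, saying only that the characterization ``follows from the symmetry of the Laplacian matrix''). One small correction to your closing remark: connectivity is not needed for the identity itself. Since $L$ is positive semidefinite with $L\mathbf{1}=0$, you may always choose $u_1=\mathbf{1}/\sqrt{m}$ within the $0$-eigenspace, and for a disconnected graph both sides equal $0$ (the second-smallest eigenvalue counted with multiplicity is then $0$); connectivity is only what makes $\lambda_2>0$.
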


This well-known characterization, called the Rayleigh quotient, follows from the symmetry of the Laplacian matrix. We next compute the spectrum of $G_n$ exactly.

\begin{lemma}[Laplacian spectrum of $G_n$]\label{lem:spectrum}
For $G_n$, a clique with a leaf, the Laplacian eigenvalues are
\[
0,\qquad
1,\qquad
n\text{ with multiplicity }n-2,\qquad
n+1.
\]
In particular,
\[
\lambda_2(G_n)=1
\]
and
\[
\gamma(G_n)
=
\frac{|E(G_n)|}{\lambda_2(G_n)}
=
\binom{n}{2}+1.
\]
\end{lemma}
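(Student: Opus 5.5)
I will determine the full Laplacian spectrum by exhibiting $n+1$ mutually orthogonal eigenvectors explicitly. Then $\lambda_2(G_n)=1$ and the formula for $\gamma(G_n)$ follow immediately. Label the vertices as the leaf $\ell$, the attachment vertex $A$, and the remaining clique vertices $u_1,\dots,u_{n-1}$. Then $\deg(\ell)=1$, $\deg(A)=n$, and $\deg(u_i)=n-1$.

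The eigenvectors are as follows.
\begin{enumerate}
\item The constant vector $\mathbf{1}$ has eigenvalue $0$.
\item For each vector $x$ supported on $\{u_1,\dots,u_{n-1}\}$ with $\sum_i x_{u_i}=0$, we have $(Lx)_{u_i}=(n-1)x_{u_i}-\sum_{j\neq i}x_{u_j}=n\,x_{u_i}$. Also $(Lx)_A=-\sum_i x_{u_i}=0$ and $(Lx)_\ell=0$. This subspace has dimension $n-2$ and eigenvalue $n$.
\item The vector $f$ from the introduction, with $f_\ell=-(n-1)$, $f_A=0$, $f_{u_i}=1$, gives eigenvalue $1$. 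Indeed, $(Lf)_\ell=f_\ell-f_A=-(n-1)$ and $(Lf)_A=n\cdot 0-f_\ell-(n-1)=0$. Finally, $(Lf)_{u_i}=(n-1)\cdot1-(n-2)\cdot1-0=1$.
\item The last eigenvector $g$ is constant on the $u_i$'s, orthogonal to $\mathbf{1}$ and to $f$, so it must be symmetric under permuting the $u_i$. I would look for $g$ with $g_{u_i}=1$ and solve for $g_\ell,g_A$ with eigenvalue $n+1$. The $u_i$ equation reads $(n-1)-(n-2)-g_A=(n+1)$, which forces $g_A=-n$. The $\ell$ equation reads $g_\ell-g_A=(n+1)g_\ell$, which gives $g_\ell=1$. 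I would then check the $A$ row: $n(-n)-1-(n-1)=-n^2-n=(n+1)(-n)$. It works.
\end{enumerate}

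The main point needing care is completeness, that is, that no eigenvalue is missed. The multiplicities listed sum to $1+1+(n-2)+1=n+1=|V|$. The vectors from distinct eigenspaces are orthogonal because $L$ is symmetric, and the family in item 2 is linearly independent. So these vectors give a full eigenbasis. As a sanity check, the trace of $L$ is $1+n+(n-1)^2=n^2-n+2$, and the eigenvalues sum to $0+1+n(n-2)+(n+1)=n^2-n+2$, which agrees.

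Since $n\ge 2$ gives $1<n$, the second-smallest eigenvalue is $\lambda_2(G_n)=1$. This is consistent with the Rayleigh-quotient lemma, whose value at $f\perp\mathbf{1}$ equals $1$. Combining this with $|E|=\binom{n}{2}+1$ gives $\gamma(G_n)=\binom n2+1$.
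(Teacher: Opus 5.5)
Your proof is correct and follows essentially the same route as the paper. Both use the $(n-2)$-dimensional eigenspace for $n$ supported on the non-attachment clique vertices, and the remaining three eigenvalues live in the subspace of vectors constant on those vertices; the only difference is that you diagonalize that three-dimensional block by exhibiting explicit eigenvectors $\mathbf{1}$, $f$, $g$ (all of which check out) and counting dimensions, whereas the paper computes the characteristic polynomial $\lambda(\lambda-1)(n+1-\lambda)$ of the $3\times3$ restriction.
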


\begin{proof}
Fix $n$. Let $L$ denote the Laplacian matrix of $G_n$, let $A$ denote the clique vertex adjacent to the leaf $\ell$, and let $B$ be the set of the other $n-1$ clique vertices.

Consider the $(n-2)$-dimensional subspace $U$ of vectors supported on $B$ whose coordinates sum to zero.
For $w\in U$ and $v\in B$,
\[
(Lw)_v
=
(n-1)w_v-\sum_{u\in B\setminus\{v\}}w_u
=
nw_v,
\]
while the coordinates at $A$ and $\ell$ are zero. Thus $Lw=nw$, so $n$ is an eigenvalue with multiplicity at least $n-2$.

It remains to consider the three-dimensional invariant subspace on which all vertices in $B$ have a common value $x$, the vertex $A$ has value $a$, and the leaf $\ell$ has value $r$. We compute that the eigenvalue equations are
\[
\lambda x=x-a,\qquad
\lambda a=na-(n-1)x-r,\qquad
\lambda r=r-a.
\]
Equivalently, the restriction of $L$ to this subspace is represented by
\[
\begin{pmatrix}
1&-1&0\\
-(n-1)&n&-1\\
0&-1&1
\end{pmatrix}.
\]
Its characteristic equation is
\[
\lambda(\lambda-1)(n+1-\lambda)=0,
\]
so the remaining eigenvalues are $0$, $1$, and $n+1$. Together with the eigenvalue $n$ on $U$, this accounts for all $n+1$ eigenvalues and proves the claim.
\end{proof}

\section{Proof of the main theorem}
We now prove Theorem~\ref{thm:main}. This theorem disproves Conjecture 2 from \cite{movassagh2022repeatedaveragesgraphs}, and further shows that the lower bound in Theorem~\ref{thm:general-bounds} is sharp.

Fix $\varepsilon\in(0,1)$ and an initial state with $\|v(0)\|_2\leq1$. Observe that
\[
\|v(0)-\bar v\|_2\leq1,
\]
where $\bar v$ is the average vector of $v$. Write $u(t)$ for the restriction of $v(t)$ to the $n$ clique vertices and let $y(t)$ be their average; let $r(t)$ be the leaf value and $z$ the global mean. Observe that $\bar v$ is the vector whose entries all equal $z$.

Let $c$ and $d$ be constants which may depend on the fixed value of $\varepsilon\in(0,1)$ but do not depend on $n$. Since the averaging process is discrete, it can only be run for integral time durations.
We will show that at time
\[
t_2
=
\left\lceil c\gamma(G_n)\right\rceil
+
\left\lceil dn\log n\right\rceil,
\]
we have
\[
\mathbb{E}\bigl[\|v(t_2)-\bar v\|_1\bigr]
\leq\varepsilon.
\]
We separate the averaging process up to time $t_2$ into two phases.

\medskip
\noindent\textbf{Phase 1 (length $t_1=\lceil c\gamma(G_n)\rceil$).}
By the standard squared $L^2$ contraction estimate~\eqref{eq:l2-contraction}, first shown by Aldous and Lanoue \cite{aldous2012lecture},
\[
\mathbb{E}\bigl[\|v(t_1)-\bar v\|_2^2\bigr]
\leq
e^{-t_1/(2\gamma(G_n))}
\|v(0)-\bar v\|_2^2
\leq
e^{-c/2}.
\]
The second inequality follows from $\|v(0)-\bar v\|_2\leq1$ and $t_1\geq c\gamma(G_n)$. Recall that $r(t)$ represents the leaf value at time $t$. By Cauchy--Schwarz,
\[
\begin{aligned}
\mathbb{E}\bigl[|r(t_1)-z|\bigr]
&\leq
\left(\mathbb{E}\bigl[|r(t_1)-z|^2\bigr]\right)^{1/2}\\
&\leq
\left(\mathbb{E}\bigl[\|v(t_1)-\bar v\|_2^2\bigr]\right)^{1/2}\\
&\leq e^{-c/4}.
\end{aligned}
\]
Next, we bound $\mathbb{E}[|y(t_1)-z|]$. By the definitions of $z$, $y$, and $r$, conservation of mass gives
\[
z(n+1)=ny(t_1)+r(t_1).
\]
Rearranging,
\[
n\bigl(z-y(t_1)\bigr)=r(t_1)-z.
\]
Taking absolute values and expectations,
\[
\mathbb{E}\bigl[|z-y(t_1)|\bigr]
=
\frac{1}{n}\mathbb{E}\bigl[|r(t_1)-z|\bigr]
\leq
\frac{1}{n}e^{-c/4}.
\]
Therefore, we have proven the following lemma.

\begin{lemma}[Distances after Phase 1]\label{lem:phase1}
We have
\[
\mathbb{E}\bigl[|r(t_1)-z|\bigr]\leq e^{-c/4}
\]
and
\[
\mathbb{E}\bigl[|y(t_1)-z|\bigr]
\leq
\frac{1}{n}e^{-c/4}.
\]
\end{lemma}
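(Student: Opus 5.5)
The two bounds in Lemma~\ref{lem:phase1} follow from the squared $L^2$ contraction estimate~\eqref{eq:l2-contraction} together with conservation of mass, so the plan is just to make that chain of inequalities explicit. First, apply~\eqref{eq:l2-contraction} at time $t_1=\lceil c\gamma(G_n)\rceil$. Since $t_1\ge c\gamma(G_n)$ and $\|v(0)-\bar v\|_2\le 1$, this gives $\mathbb{E}[\|v(t_1)-\bar v\|_2^2]\le e^{-c/2}$. The bound $\|v(0)-\bar v\|_2\le\|v(0)\|_2$ holds because $v(0)-\bar v$ is the orthogonal projection of $v(0)$ onto $\mathbf{1}^\perp$.

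Next, isolate the leaf coordinate. Pointwise we have $|r(t_1)-z|^2\le\|v(t_1)-\bar v\|_2^2$, since the left side is one term of the sum on the right. Jensen's inequality (equivalently Cauchy--Schwarz against the constant $1$) then gives
\[
\mathbb{E}\bigl[|r(t_1)-z|\bigr]\le\bigl(\mathbb{E}[|r(t_1)-z|^2]\bigr)^{1/2}\le e^{-c/4},
\]
which is the first inequality.

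For the clique average, use that the total sum $\sum_i v_i(t)$ is deterministic and invariant under every averaging step. Hence $(n+1)z=ny(t_1)+r(t_1)$ holds almost surely, and so $n(z-y(t_1))=r(t_1)-z$ as an exact random identity. Taking absolute values and expectations transfers the first bound with an extra factor $1/n$, giving $\mathbb{E}[|y(t_1)-z|]\le e^{-c/4}/n$.

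There is no real obstacle here. The one point that needs care is that mass conservation is an almost-sure identity rather than one holding only in expectation, since this is what lets the leaf bound transfer to the clique mean with the gain of $1/n$. The harder work in the overall argument is Phase 2, where clique-internal deviations, not the average, must be controlled in $L^1$.
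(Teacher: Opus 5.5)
Your proposal is correct and follows the paper's proof step for step. You apply the $L^2$ contraction estimate at $t_1\ge c\gamma(G_n)$, bound the leaf coordinate by Cauchy--Schwarz, and transfer that bound with the factor $1/n$ through the almost-sure mass-conservation identity $n(z-y(t_1))=r(t_1)-z$; your projection argument for $\|v(0)-\bar v\|_2\le\|v(0)\|_2$ also justifies a step the paper only asserts.
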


\medskip
\noindent\textbf{Phase 2 (length $\Delta t=\lceil dn\log n\rceil$).}
Recall that $t_2=t_1+\Delta t$, and let $E_s$ denote the edge selected at step $s$. Recall that $K_n$ is the $n$-clique subgraph of $G_n$. Define
\[
A
=
\{E_s\in E(K_n)\text{ for every }s=t_1+1,\ldots,t_2\}.
\]
On $A$, the clique average remains constant, so
\[
y(t)=y(t_1)
\qquad
(t_1\leq t\leq t_2),
\]
and the leaf value remains equal to $r(t_1)$. Since the original edge choices are independent and uniform on $E(G_n)$, conditional on $A$ the edges
\[
E_{t_1+1},\ldots,E_{t_2}
\]
are independent and uniformly distributed on $E(K_n)$. Thus, conditional on $A$, the clique evolves exactly as the averaging process on $K_n$.

By the union bound and $|E(G_n)|=\binom{n}{2}+1$,
\[
\mathbb{P}(A^c)
\leq
\frac{\Delta t}{|E(G_n)|}
\leq
\frac{dn\log n+1}{\binom{n}{2}+1}
\leq
\frac{4d\log n}{n}
\]
for all sufficiently large $n$.

The reference \cite{movassagh2022repeatedaveragesgraphs} gives
\[
\lambda_2(K_n)=n
\qquad\text{and hence}\qquad
\gamma(K_n)=\frac{n-1}{2}.
\]
Conditional on $A$, the contraction estimate~\eqref{eq:l2-contraction} gives
\[
\begin{aligned}
\mathbb{E}\bigl[
\|u(t_2)-y(t_1)\mathbf{1}\|_2^2
\mid A
\bigr]
&\leq
e^{-\Delta t/(2\gamma(K_n))}
\mathbb{E}\bigl[
\|u(t_1)-y(t_1)\mathbf{1}\|_2^2
\mid A
\bigr]\\
&\leq n^{-d}.
\end{aligned}
\]
Here the last inequality uses $2\gamma(K_n)=n-1$, $\Delta t\geq dn\log n$, and the bound
\[
\|u(t_1)-y(t_1)\mathbf{1}\|_2\leq1,
\]
justified next.

Indeed,
\[
\begin{aligned}
\|u(t_1)-y(t_1)\mathbf{1}\|_2^2
&=
\min_{a\in\mathbb{R}}
\|u(t_1)-a\mathbf{1}\|_2^2\\
&\leq
\|u(t_1)-z\mathbf{1}\|_2^2\\
&\leq
\|v(t_1)-\bar v\|_2^2\\
&\leq1.
\end{aligned}
\]

We have kept the time $t_1$ for $y$ because conditioning on $A$ implies that $y(t_1)=y(t_2)$. Then
\[
\begin{aligned}
\mathbb{E}\bigl[
\|u(t_2)-y(t_1)\mathbf{1}\|_1
\mid A
\bigr]
&\leq
\sqrt{n}\,
\mathbb{E}\bigl[
\|u(t_2)-y(t_1)\mathbf{1}\|_2
\mid A
\bigr]\\
&\leq
\sqrt{n}
\left(
\mathbb{E}\bigl[
\|u(t_2)-y(t_1)\mathbf{1}\|_2^2
\mid A
\bigr]
\right)^{1/2}\\
&\leq
n^{(1-d)/2}.
\end{aligned}
\]

Because $A$ depends only on edge choices after time $t_1$, it is independent of the state at time $t_1$. Therefore, Lemma~\ref{lem:phase1} gives the same bounds conditional on $A$. By the triangle inequality,
\[
\begin{aligned}
\mathbb{E}\bigl[
\|u(t_2)-z\mathbf{1}\|_1
\mid A
\bigr]
&\leq
n^{(1-d)/2}
+
n\,\mathbb{E}\bigl[
|y(t_1)-z|
\mid A
\bigr]\\
&\leq
n^{(1-d)/2}+e^{-c/4}.
\end{aligned}
\]
Using both bounds in Lemma~\ref{lem:phase1} and the identity $r(t_2)=r(t_1)$ on $A$, we obtain
\[
\mathbb{E}\bigl[
\|v(t_2)-\bar v\|_1
\mid A
\bigr]
\leq
n^{(1-d)/2}+2e^{-c/4}.
\]

If we remove the conditioning, then
\[
\begin{aligned}
\mathbb{E}\bigl[\|v(t_2)-\bar v\|_1\bigr]
={}&
\mathbb{E}\bigl[
\|v(t_2)-\bar v\|_1
\mid A
\bigr]\mathbb{P}(A)\\
&+
\mathbb{E}\bigl[
\|v(t_2)-\bar v\|_1
\mid A^c
\bigr]\mathbb{P}(A^c),
\end{aligned}
\]
and hence
\[
\begin{aligned}
\mathbb{E}\bigl[\|v(t_2)-\bar v\|_1\bigr]
\leq{}&
\mathbb{E}\bigl[
\|v(t_2)-\bar v\|_1
\mid A
\bigr]\\
&+
\mathbb{E}\bigl[
\|v(t_2)-\bar v\|_1
\mid A^c
\bigr]\mathbb{P}(A^c).
\end{aligned}
\]

Recall that
\[
\mathbb{P}(A^c)\leq\frac{4d\log n}{n}.
\]
Each averaging update is an $L^2$ contraction, so deterministically
\[
\begin{aligned}
\|v(t_2)-\bar v\|_1
&\leq
\sqrt{n+1}\,\|v(t_2)-\bar v\|_2\\
&\leq
\sqrt{n+1}\,\|v(0)-\bar v\|_2\\
&\leq
\sqrt{n+1}.
\end{aligned}
\]
Thus,
\[
\mathbb{E}\bigl[
\|v(t_2)-\bar v\|_1
\mid A^c
\bigr]
\leq
\sqrt{n+1}.
\]
Substituting,
\[
\mathbb{E}\bigl[\|v(t_2)-\bar v\|_1\bigr]
\leq
n^{(1-d)/2}
+
2e^{-c/4}
+
\frac{4d\log n}{n}\sqrt{n+1}.
\]

First choose $d>1$. Next choose $c=c(\varepsilon)$ sufficiently large that
\[
2e^{-c/4}\leq\frac{\varepsilon}{3}.
\]
Finally, take $n$ sufficiently large that
\[
n^{(1-d)/2}\leq\frac{\varepsilon}{3}
\]
and
\[
\frac{4d\log n}{n}\sqrt{n+1}
\leq
\frac{\varepsilon}{3}.
\]
Then the right-hand side of the preceding inequality is at most $\varepsilon$. This implies
\[
t_{\varepsilon,2\to1}(G_n)
\leq
t_2
=
\left\lceil c\gamma(G_n)\right\rceil
+
\left\lceil dn\log n\right\rceil
\]
by definition.

\medskip
\noindent\textbf{Summary.}
By Lemma~\ref{lem:spectrum},
\[
\lambda_2(G_n)=1
\]
and
\[
\gamma(G_n)
=
\binom{n}{2}+1
=
\Theta(n^2).
\]
Consequently,
\[
\begin{aligned}
t_{\varepsilon,2\to1}(G_n)
&\leq
t_2\\
&=
\left\lceil c\gamma(G_n)\right\rceil
+
\left\lceil dn\log n\right\rceil\\
&=
O_{\varepsilon}\bigl(\gamma(G_n)\bigr),
\end{aligned}
\]
because $n\log n=o(\gamma(G_n))$ and $c,d$ do not depend on $n$. The lower bound in Theorem~\ref{thm:general-bounds} gives
\[
t_{\varepsilon,2\to1}(G_n)
=
\Omega_{\varepsilon}\bigl(\gamma(G_n)\bigr).
\]
Therefore,
\[
t_{\varepsilon,2\to1}(G_n)
=
\Theta_{\varepsilon}\bigl(\gamma(G_n)\bigr)
=
\Theta_{\varepsilon}(n^2),
\]
completing the proof of Theorem~\ref{thm:main}. \qed

\section{Conclusion}
We have shown that Conjecture~2 of \cite{movassagh2022repeatedaveragesgraphs} does not hold in general by presenting the simple counterexample of a complete graph with a pendant vertex. Our two-phase argument demonstrates that the $L^2\!\to L^1$ mixing time is
$\Theta_{\varepsilon}(\gamma(G_n))$, rather than $\Theta_{\varepsilon}(\gamma(G_n)\log n)$ as conjectured.
Our counterexample has a strongly localized Fiedler eigenvector whose normalized squared $L^2$ mass is concentrated on the leaf, while the balancing mass is spread across the clique; this intuitively eliminates the extra $\log n$ factor. Our results highlight the need for revised upper bounds on the averaging process that might account for eigenvector localization or other graph properties, and suggest new directions for understanding how graph structure shapes consensus dynamics.

\newpage

\section*{Acknowledgments}
This research was part of a project which was conducted during the summer of 2025 as part of Caltech’s SURF program under the supervision of Lingfu Zhang, in the Division of Physics, Mathematics, and Astronomy. The project was partially funded by the SURF program and NSF grant DMS-2505625. 
\section*{Artificial Intelligence Usage}
All proofs were developed without AI assistance. OpenAI’s ChatGPT was used only to proofread the manuscript and polish its exposition.

\end{document}